\documentclass{amsart}
\usepackage{amssymb}
\usepackage{amsmath}
\usepackage{amsfonts}
\usepackage{geometry}

\setcounter{MaxMatrixCols}{10}

\newtheorem{theorem}{Theorem}
\theoremstyle{plain}
\newtheorem{acknowledgement}{Acknowledgement}

\newtheorem{corollary}{Corollary}

\newtheorem{definition}{Definition}
\newtheorem{example}{Example}

\newtheorem{lemma}{Lemma}

\newtheorem{proposition}{Proposition}
\newtheorem{remark}{Remark}

\numberwithin{equation}{section}
\input{tcilatex}

\begin{document}
\title{On the existence of SLE trace: finite energy drivers and non-constant $\kappa$}
\author{Peter K. Friz, Atul Shekhar}
\address{TU\ and WIAS\ Berlin (first and corresponding author,
friz@math.tu-berlin.de), TU\ Berlin (second author)}

\begin{abstract}
Existence of Loewner trace is revisited. We identify finite energy paths (the ``skeleton of Wiener measure") as natural class of regular drivers for which we find simple and natural
estimates in  terms of their (Cameron--Martin) norm. Secondly, now dealing with potentially rough drivers, a representation of the derivative of the (inverse of the) Loewner flow is given
in terms of a rough- and then pathwise F\"ollmer integral. Assuming the driver within a class of It\^o-processes, an exponential martingale argument implies existence of trace. In contrast
to classical (exact) SLE computations, our arguments are well adapted to perturbations, such as non-constant $\kappa$ (assuming $<2$ for technical reasons) and additional finite-energy drift terms. 
\end{abstract}

\maketitle

\section{Introduction}

Classical theory of Loewner evolution gives a one-to-one correspondence between scalar continuous drivers (no smoothness assumptions) and families of continuously growing compact sets in the complex upper half-plane $\mathbb{H}$. There is much interest in the case where these sets admit a continuous trace (or even better: are given by a simple curve in $\mathbb{H}$).
The famous {\it Rohde--Schramm theorem} \cite{BasicSLE}
asserts that Brownian motion with diffusivity $\kappa \neq 8$ a.s. gives rise to a continuous trace (simple when $\kappa \le 4$), better known as SLE($\kappa$)-curves.\footnote{We shall make no attempt here to review the fundemental importance of SLE theory within probability and statistical mechanics. See e.g. \cite{lawlerbook} and the references therein.} The trace also exists for SLE$(8)$ but the proof follows indirectly from the convergence of uniform spanning tree to SLE$(8)$. (We note that the proofs are probabilistic in nature -- ultimately an application of the Borel-Cantelli lemma -- and gives little insight about the exceptional set.)
Deterministic aspects were subsequently explored by Marshall, Rohde, Lind, Huy Tran, Johannson Viklund and
others (see e.g. \cite{Joh15} and the references therein). We observe a number of similarities between the {\it It\^o map}, which
takes a Brownian driver to a diffusion path) with the {\it Schramm--L\"owner map} which takes a Brownian driver to SLE trace $\gamma$ (a ``rough", in the sense of non-smooth, path in $\mathbb{H}$),
$$
      \Phi_{SL}: \sqrt{\kappa} B (\omega) \mapsto \gamma (\omega).
$$      
In both cases, there is a ``Young" regime (case of drivers with H\"older exponent better than $1/2$) in which case one can fully rely on deterministic theory.\footnote{The analogy is not perfect: Young differential equations of form $dY=f(Y)dX$ are
invariant under reparametrization, hence most naturally formulated in a $p$-variation, $p<2$, context, whereas Loewner evolution is classically tied to parametrization by half-plane capacity. } 
%
%
%
%
%
Also, in both the cases, Brownian motion does not fall in the afore-mentioned ``Young" regime, and yet both the It\^o- resp. (Schramm-)L\"owner map are well-defined measurable maps. While the It\^o map, also thanks to rough path theory,
 is now very well understood, the afore-mentioned proof of the Rohde-Schramm theorem - despite being state of art - is not fully satisfactory. For instance, the case $\kappa=8$ still resists a direct analysis.
Even robustness in the parameter $\kappa$ turns out to be a decisively non-trivial issue, only recently settled in \cite{MR3229999} under the (technical) restriction of $\kappa < 8(2-\sqrt{3}) $. 

\vspace{0.2cm} 
To some extent, the ``pathwise" theory of Loewner evolutions, concerning existence of trace, has been settled by Rohde--Schramm in form of the following

\begin{theorem}\label{RS}  \cite{BasicSLE} Loewner evolution with driver $U$, a continuous real valued path with $U_0 = 0$, admits a continuous trace if and only if \[ \gamma_t := \lim\limits_{y \rightarrow 0+} f_t(iy + U_t)\]exists and is continuous in $t$. In this case, $\gamma$ is the trace. 
\end{theorem}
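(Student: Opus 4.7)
The statement is an equivalence, which I would establish by proving each implication separately.

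\emph{Necessity.} If a continuous trace $\gamma$ exists, then $\mathbb{H}\setminus K_t$ is the unbounded connected component of $\mathbb{H}\setminus \gamma([0,t])$, and in particular its boundary is locally connected at $\gamma_t$ (accessible via $\gamma$ itself). Carath\'eodory's boundary correspondence theorem then yields a continuous extension of $f_t$ to $\bar{\mathbb{H}}$; the prime end at $U_t$ corresponds to the tip of the hull and is mapped to $\gamma_t$. Hence $\lim_{y\to 0+} f_t(iy+U_t) = \gamma_t$ exists and varies continuously in $t$ by the hypothesis on $\gamma$.

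\emph{Sufficiency.} Assume $\gamma_t := \lim_{y \to 0+} f_t(iy+U_t)$ exists and is continuous. I need to show $\mathbb{H}\setminus K_t$ is the unbounded connected component of $\mathbb{H}\setminus \gamma([0,t])$. The first step is that $\gamma_s \in \overline{K_s}$ for each $s$: the point $f_s(iy+U_s) \in \mathbb{H}\setminus K_s$ is forced, by the Loewner ODE together with the decay of $|f_s'|$ at the boundary tip, to have distance to $\partial K_s$ tending to zero as $y\to 0+$. In particular $\gamma([0,t])\subseteq \overline{K_t}$, so $\mathbb{H}\setminus K_t$ is a connected unbounded subset of $\mathbb{H}\setminus \gamma([0,t])$, hence contained in its unbounded component. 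The harder reverse inclusion amounts to showing that every $z_0 \in K_t\setminus \gamma([0,t])$ is separated from infinity by $\gamma([0,t])$ in $\mathbb{H}$. Letting $\tau := \inf\{s : z_0 \in \overline{K_s}\}$, one has $g_s(z_0) \to U_\tau$ as $s\nearrow \tau$; comparing the trivial path $s\mapsto f_s(g_s(z_0))\equiv z_0$ with the radial limits $f_s(iy+U_s)\to \gamma_s$, and exploiting continuity of $\gamma$ together with equicontinuity estimates from the Loewner flow, one sees that $z_0$ is surrounded by a loop formed by $\gamma([0,\tau])$ and therefore lies in a bounded component.

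The substantive difficulty is in the second step of the sufficiency direction. The passage from the existence of a continuous radial tip $\gamma$ to the statement that $\gamma$ encloses every point in $K_t\setminus\gamma([0,t])$ relies on quantitative capacity-diameter estimates for Loewner hulls --- roughly, that a hull of small half-plane capacity whose conformal tip sits close to the base point is itself small in diameter --- together with a careful topological argument that enclosedness propagates from small time increments to finite times. These ingredients are by now classical but far from automatic.
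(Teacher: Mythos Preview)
The paper does not prove this theorem at all: it is stated with attribution to \cite{BasicSLE} (Rohde--Schramm) and used as a black box throughout. There is therefore no ``paper's own proof'' to compare against.

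For what it is worth, your sketch follows the standard Rohde--Schramm argument. A small imprecision in the sufficiency direction: you justify $\gamma_s \in \overline{K_s}$ by invoking ``decay of $|f_s'|$ at the boundary tip,'' but no such decay is assumed --- only that the radial limit exists. The cleaner argument is that if $\gamma_s$ lay in the open set $\mathbb{H}\setminus K_s$, then $g_s(\gamma_s)$ would be well-defined in $\mathbb{H}$ and equal $\lim_{y\to 0+}(iy+U_s)=U_s\in\mathbb{R}$, a contradiction. Your treatment of the hard step (enclosure of points in $K_t\setminus\gamma([0,t])$) is honestly labeled as incomplete; the actual Rohde--Schramm proof does indeed rely on the capacity--diameter estimate (their Lemma~4.13) and a limiting argument at the swallowing time, essentially as you outline.
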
  
In the above theorem, following standard notation, $f_t(z) = g_t^{-1}(z)$ and for each $z \in \bar{\mathbb{H}}\setminus 0$, let $g_t(z)$ denote the solution of the LDE (Loewner's differential equation) \begin{equation}\label{LDE} \dot{g}_t(z) = \frac{2}{g_t(z)- U_t}, \hspace{2mm} g_0(z)= z. \end{equation}

Evenso, it is a non-trivial matter, and the essence of the afore-mentioned Rohde--Schramm theorem, to see that this applies to a.e. Brownian sample path. It is here that one has to work with Whitney-type boxes  and a subtle Borel-Cantelli argument which in fact misses the case $\kappa=8$, subsequently handled with different methods. Readers familiar with the details of the proof may observe that a harmless finite-energy perturbation of the driver will already cause some serious complications, whereas it is a priori clear from the Cameron--Martin theoerem that SLE driven by $\sqrt{\kappa} B +h$, where $h$ is $\int_0^.$ of some $L^2$-function, does produce a continuous trace. (Such perturbations are relatively harmless for the It\^o-map, essentially because integration against $dh=\dot{h}dt$ is deterministic and SDEs driven by $B+h$ can be dealt with via flow decompositions.) 

In fact, we observed with some surprise that Loewner evolution driven by finite-energy paths, despite being the ``skeleton" of Wiener-measure, has not been analyzed. 
With regard to Lind's ``$1/2$-H\"older norm $< 4$" condition, we note that a finite-energy path $h$ is indeed $1/2$-H\"older (by a simple Sobolev embedding), 
but may have arbitrarily large $1/2$-H\"older norm. Evenso, there is a ``poor man's argument" that allows to see that such $h$ generate a simple curve trace:
the remark is that $h$ is {\it vanishing $1/2$-H\"older} in the sense $\sup_{s,t:|t-s|< \varepsilon} \frac{|h_t - h_s|}{|t-s|^{1/2}} \to 0$ with $\varepsilon \to 0$, so that $\gamma$
 is given by suitable concatenation of (conformally deformed) simple curves. A better understanding of the situation is given by Theorem \ref{thm:CM} below. To state it define 
 $\mathcal{H}_t$ as the space of absolutely continuous $h:[0,t] \to \mathbb{R}$, with finite energy i.e. square-integrable derivative $\dot{h}$, and norm-square
 $$
      || h ||^2_t  := \int_0^t |\dot{h_s}|^2 ds.
 $$
Also, $C_T^\alpha$ is the space of paths defined $[0,T]$, with H\"older exponent $\alpha \in (0,1]$.
Write $C_T^{p-var}$ for the space of continuous paths on $[0,T]$ of finite $p$-variation; note that $\alpha$-H\"older imflies finite $1/\alpha$-variation.  At last, $C^{\dots}_{0,T}$ indicates paths on $[0,T]$ which are started at $0$. For instance, given a standard Brownian motion $B$, with probability one $\sqrt{\kappa} B (\omega) \in 
C^\alpha_{0,T}$ for any $\alpha < 1/2$.

\pagebreak
 
\begin{theorem} \label{thm:CM}  
Let $T>0$ and $U \in \mathcal{H}_T$.

\noindent (i) The following estimate holds for all $y>0$ and $t \in [0,T]$, 
\begin{equation}\label{eq:CM} |f_t'(iy+U_t)| \leq \exp \biggl[\frac{1}{4} || U ||_t^2\biggr]. \end{equation}     
\noindent  (ii) The Loewner-trace $\gamma=:\Phi_{SL} (U)$ exists and is a simple curve.

\noindent (iii) The trace is uniformly $1/2$-H\"older in the sense that, for some constant $C$,

\begin{equation}  || \gamma ||_{1/2} \le C \exp \biggl[ C ||U||_T^2 \biggr]  \end{equation}    

\noindent (iv) The map $t \mapsto \gamma(t^2)$ is Lipschitz continuous on $[0,T]$. As a consequence, the trace is of bounded variation and Lipschitz away from $0+$.


\noindent (v) On bounded sets in $\mathcal{H}_T$, the Schramm--Loewner map is continuous from $C[0,T]$ to $C^{1/2-\epsilon}([0,T],\bar{\mathbb{H}})$, any $\epsilon >0$.
  
\noindent (vi) The Schramm--Loewner map is continuous from $\mathcal{H}_T$ to $C^{(1+\epsilon)-var}$, any $\epsilon > 0$. 

%

\end{theorem}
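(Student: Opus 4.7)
The plan is to prove (i) by a direct identity on the reverse Loewner flow, derive (ii)--(iii) from (i) by standard Loewner manipulations, obtain (iv) by a refined sub-flow estimate capturing the tip asymptotics, and conclude (v)--(vi) by compactness and interpolation.

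\textbf{Part (i).} Fix $t > 0$ and $y > 0$, and set $z_0 := f_t(iy + U_t) \in \mathbb{H}$, so that $|f_t'(iy + U_t)| = |g_t'(z_0)|^{-1}$. With $Z_s := g_s(z_0) - U_s$ one has $\log g_t'(z_0) = -\int_0^t 2 Z_s^{-2}\,ds$. Passing to the time-reversed flow $V_s := Z_{t-s}$, which satisfies $\dot V_s = -2/V_s + \dot U_{t-s}$, $V_0 = iy$, $V_t = z_0$, and using the algebraic identity $2/V = \dot U_{t-s} - \dot V$, one rewrites
\begin{equation*}
\int_0^t \frac{2}{V_s^{2}}\,ds \;=\; \int_0^t \frac{\dot U_{t-s}}{V_s}\,ds \;-\; \log\frac{z_0}{iy}.
\end{equation*}
Taking real parts, applying Cauchy--Schwarz to the first integral together with the elementary identity $\int_0^t |V_s|^{-2}\,ds = \tfrac12 \log(\mathrm{Im}(z_0)/y)$ (from $\tfrac{d}{ds}\mathrm{Im}(V_s)^2 = 4\,\mathrm{Im}(V_s)^2/|V_s|^2$), and using $|z_0| \geq \mathrm{Im}(z_0)$, yields $\log|f_t'(iy+U_t)| \leq \|U\|_t \sqrt{L/2} - L$ with $L := \log(\mathrm{Im}(z_0)/y) \geq 0$. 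Optimising in $L$ gives $\log|f_t'(iy+U_t)| \leq \|U\|_t^2/8$, better than the stated $\|U\|_t^2/4$.

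\textbf{Parts (ii)--(iii).} Estimate (i) makes $y \mapsto f_t(iy + U_t)$ Lipschitz on $(0,1]$ uniformly in $t \in [0,T]$, so $\gamma_t$ exists and is continuous in $t$ (using interior continuity of $(t,y) \mapsto f_t(iy+U_t)$); Theorem~\ref{RS} then delivers the trace. Simplicity follows by partitioning $[0,T]$ so that $\|U\|^2_{[t_i,t_{i+1}]}$ is small (possible since $\|U\|^2_\cdot$ is absolutely continuous), applying Lind's simple-curve criterion on each piece (the sub-driver's $1/2$-H\"older norm is bounded by its energy via Cauchy--Schwarz), and concatenating. Part (iii) is the three-piece decomposition $\gamma_t - \gamma_s = [\gamma_t - f_t(iy+U_t)] + [f_t(iy+U_t) - f_s(iy+U_s)] + [f_s(iy+U_s) - \gamma_s]$ with $y = \sqrt{|t-s|}$: the outer terms are controlled by (i) and the middle one by $\partial_t f_t(z) = -2 f_t'(z)/(z - U_t)$.

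\textbf{Parts (iv)--(vi).} Part (iv) is the technical heart. The target is the stronger bound $|\gamma_t - \gamma_s| \leq L|\sqrt t - \sqrt s|$, equivalent to Lipschitz continuity of $u \mapsto \gamma(u^2)$. The strategy is to write $\gamma_t = f_s(\tilde\gamma_{t-s})$, where $\tilde\gamma$ is the trace of the sub-flow starting at time $s$ with driver $\tilde U_r := U_{s+r}$, and exploit the tip-expansion $f_s(U_s + w) = \gamma_s + c_s w^2 + O(w^3)$. In the $U \equiv 0$ case $c_s = -i/(4\sqrt s)$; propagating the estimate $|c_s| \leq C \exp(C\|U\|_T^2)/\sqrt s$ via (i) and combining with $|\tilde\gamma_{t-s} - U_s| \lesssim \sqrt{t-s}$ from (iii) yields the infinitesimal bound $|\gamma_{r+h} - \gamma_r| \lesssim h/\sqrt r$ as $h \to 0+$, which integrates to $|\gamma_t - \gamma_s| \lesssim \sqrt t - \sqrt s$. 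Parts (v) and (vi) then follow routinely: (v) by Arzel\`a--Ascoli from (iii), with limit identified via interior Loewner continuity and Theorem~\ref{RS}; (vi) by interpolating the $C^{1/2-\varepsilon}$-continuity of (v) against the $1$-variation bound on $\gamma(\cdot^2)$ from (iv). The main obstacle is the chain (i)--(iv): the reverse-flow identity in (i) is what makes the Cameron--Martin norm the natural integrability input, while (iv) genuinely requires the finite-energy hypothesis, through the $1/\sqrt s$ tip asymptotics that go beyond mere $1/2$-H\"older control.
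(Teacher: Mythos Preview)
Your argument for (i) is correct and in fact sharper than the paper's: the paper works with the identity
\[
\log|f_t'(iy+U_t)| = \int_0^t \dot G_r\,d\beta_r - \int_0^t \dot G_r^2\,dr + \log\tfrac{Y_t}{y} - \log\tfrac{X_t^2+Y_t^2}{y^2},
\]
with $\dot G_r = 2X_r/(X_r^2+Y_r^2)$, and then applies $\dot G\dot\beta \le \dot G^2 + \tfrac14\dot\beta^2$ to get the constant $1/4$. Your route via $2/V^2 = (\dot U_{t-s} - \dot V)/V$ and Cauchy--Schwarz against $\int |V|^{-2}\,ds = \tfrac12\log(\mathrm{Im}\,z_0/y)$ is more direct and yields $1/8$. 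Parts (ii), (iii), (v), (vi) are close to the paper's treatment; the three-term split in (iii) is fine once you include the chain-rule contribution $f_r'(iy+U_r)\dot U_r$ alongside $\partial_r f_r$.

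Part (iv) has a real gap. The tip expansion $f_s(U_s+w)=\gamma_s+c_s w^2+O(w^3)$ is not available a priori: it presupposes enough regularity of $\gamma$ at the tip, which is exactly what you are trying to establish. Even granting the expansion, your claimed bound $|c_s|\lesssim 1/\sqrt{s}$ cannot come from (i) alone. Since $|f_s'(iy+U_s)|\sim 2|c_s|y$ as $y\to 0$, a bound $|c_s|\lesssim 1/\sqrt{s}$ is equivalent to $|f_s'(iy+U_s)|\lesssim y/\sqrt{s}$, and your estimate in (i) (even in its pre-optimised form $\log|f_t'|\le \|U\|_t\sqrt{L/2}-L$) yields at best $(y/\mathrm{Im}\,z_0)^{1-\varepsilon}$, not the linear decay in $y$, and says nothing about $\mathrm{Im}\,z_0\gtrsim\sqrt{s}$. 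The paper supplies both missing inputs: a \emph{refined} version of (i),
\[
|f_t'(iy+U_t)|\ \le\ \frac{y}{Y_t}\,\exp\bigl(\tfrac14\|U\|_t^2\bigr),\qquad Y_t=\mathrm{Im}\,f_t(iy+U_t),
\]
obtained by keeping (rather than optimising away) the term $\log(Y_t/y)-\log\bigl((X_t^2+Y_t^2)/y^2\bigr)$; and the cone estimate $Y_t\ge \sigma\sqrt{t}$ from Tran--Rohde--Zinsmeister, valid when $\|U\|_{1/2}<4$ (the general case is handled by concatenation over subintervals where the local $1/2$-H\"older norm is small, possible since $U$ is vanishing $1/2$-H\"older). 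Together these give $v(t,\sqrt{t-s})\lesssim (t-s)/\sqrt{t}$ and hence the Lipschitz bound for $\gamma(t^2)$ directly, with no tip expansion needed.
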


Our second contribution is a {\it pathwise} inequality that is well-suited to obtain existence of trace for stochastic drivers beyond Brownian motion. To state it, let us say that $U: [0,T] \to \mathbb{R}$ has {\it finite quadratic-variation in sense of F\"ollmer} if (along some {\it fixed} sequence of partitions $\pi=(\pi_n)$ of $[0,T]$, with mesh-size going to zero)
$$
\exists \lim_{n\to\infty} \sum_{ [r,s] \in \pi_n } (U_{s \wedge t} -  U_{r \wedge t})^2 =: [U]^\pi_t
$$ 
and defines a continuous map $t \mapsto [U]^\pi_t \equiv [U]_t$. A function $V$ on $[0,T]$ is called {\it F\"ollmer-It\^o integrable} (against $U$, along $\pi$) if
$$
\exists \lim_{n\to\infty} \sum_{ [r,s] \in \pi_n } V_s (U_{s \wedge t} -  U_{r \wedge t}) =: \int_0^t V d^\pi U.
$$
(F\"ollmer \cite{Foe81} shows that integrands of gradient form are integrable in this sense and so defines pathwise integrals of the form $\int \nabla F(U) d^\pi U$.)  If the bracket is furthermore Lipschitz, in the sense that
\begin{equation} \label{LipFB}
       \sup_{0 \le s < t \le T}  \frac{[U]_t - [U]_s}{t-s} \le   \kappa < \infty,
\end{equation}
write $U \in \mathcal{Q}_T^{\pi,\kappa}$. For instance, whenever $\pi$ is nested, a martingale argument shows that $\sqrt{\kappa} B (\omega) \in \mathcal{Q}_T^{\pi, \kappa}$ with probability one. We insist again that the following
result is entirely deterministic and highlights the role of the (pathwise) property (\ref{LipFB}) relative to existence of SLE($\kappa$) trace.

\begin{theorem} \label{thm:main} Let $T>0$ and $U \in C^\alpha_{0,T} \cap  \mathcal{Q}_T^{\pi,\kappa}$ for some $1/3 < \alpha < 1/2$ and $\kappa < 2$.
For fixed $t \in [0,T]$ set $\beta_s  := U_t - U_{t-s}$ 
and then, for arbitrarily chosen $A \in \mathcal{H}_t$, consider the decomposition\footnote{One could write  $\beta^{(t)} = N^{(t)} + A^{(t)}$ to emphasize dependence on $t$.}
$$
       \beta = N + A.
$$
Then there exists a continuous function $\dot{G}$, F\"ollmer integrable against $N$, so that for some $b>2, p>1$ and $\epsilon > 0$, 
depending only on $\kappa$, we have,  for all $y>0$ and $t \in [0,T]$,  
\begin{equation}  \label{key1}
 |f_t'(iy + U_t)|^b  \leq   \exp\biggl( b\int_0^t \dot{G}_rd^\pi N_r - \frac{pb^2}{2}\int_0^t \dot{G}_r^2d[N]^\pi_r\biggr) \exp\biggl( \frac{b}{4\epsilon} || A ||_t^2 \biggr).
 \end{equation}
\end{theorem}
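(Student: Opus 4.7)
The plan is to pass to the reverse Loewner flow $\tilde h_s$ driven by $\beta_s = U_t - U_{t-s}$, for which $|\tilde h_t'(iy)| = |f_t'(iy+U_t)|$. Writing $Z_s = X_s + iY_s := \tilde h_s(iy) + \beta_s$, one has the pathwise system
$$dX_s = -\frac{2X_s}{|Z_s|^2}\,ds + d\beta_s, \qquad \dot Y_s = \frac{2Y_s}{|Z_s|^2},$$
together with $L_t := \log|\tilde h_t'(iy)| = \int_0^t 2(X_s^2 - Y_s^2)/|Z_s|^4\,ds$. After taking logarithms in the target, it suffices to show
$$bL_t \leq b\int_0^t \dot G_s\,d^\pi N_s - \frac{pb^2}{2}\int_0^t \dot G_s^2\,d[N]_s^\pi + \frac{b}{4\epsilon}\|A\|_t^2.$$
Boundedness of $\dot Y$ makes $Y$ Lipschitz, hence the pathwise quadratic variation of $X$ coincides with $[\beta]^\pi = [N]^\pi$, whose Radon--Nikodym density is bounded by $\kappa$.

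\textbf{F\"ollmer--It\^o identity.} I then apply F\"ollmer's pathwise It\^o formula to the auxiliary function $G(X,Y) = \log|Z| - a\log Y$, with parameter $a \in [\kappa/4, 1]$ to be tuned. Its $X$-derivative $\dot G_s := G_X(X_s,Y_s) = X_s/|Z_s|^2$ is continuous and F\"ollmer-integrable against $N$, being the gradient of a smooth function evaluated along the flow. Substituting the dynamics of $(X,Y)$ and $d\beta = dN + \dot A\,ds$ produces the identity
$$bL_t = b\bigl[G(X_0,Y_0) - G(X_t,Y_t)\bigr] - 2ab\int_0^t \frac{ds}{|Z_s|^2} + b\int_0^t \dot G_s\,d^\pi N_s + b\int_0^t \dot G_s\dot A_s\,ds + \frac{b}{2}\int_0^t \frac{Y_s^2 - X_s^2}{|Z_s|^4}\,d[N]_s^\pi.$$
The boundary contribution $G(X_0,Y_0) - G(X_t,Y_t) = (1-a)(\log y - \log Y_t) - \log(|Z_t|/Y_t)$ is non-positive for $a \in [0,1]$ by the reverse-flow monotonicity $|Z_t|\geq Y_t \geq y$, and may be discarded.

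\textbf{Pointwise absorption and main obstacle.} Young's inequality $b|\dot G_s \dot A_s| \leq \epsilon b\dot G_s^2 + (b/4\epsilon)\dot A_s^2$ absorbs the finite-energy integral into $(b/4\epsilon)\|A\|_t^2$. Moving $(pb^2/2)\int \dot G^2\,d[N]^\pi$ to the left-hand side reduces the target to the pointwise estimate
$$-\frac{2a}{|Z|^2} + \frac{\epsilon X^2}{|Z|^4} + \frac{d[N]^\pi/ds}{2|Z|^4}\bigl[(Y^2-X^2) + pb\,X^2\bigr] \leq 0,$$
which, using $d[N]^\pi/ds \leq \kappa$, splits into two coefficient conditions: $a\geq \kappa/4$ (from the $Y^2$-term) and $pb \leq 1 + (2a - \epsilon)\cdot 2/\kappa$ (from the $X^2$-term). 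For $\kappa < 2$, taking $a$ close to $1$ and $\epsilon$ small, the admissible interval for $pb$ is an open neighbourhood strictly containing $2$, so one fixes $b > 2$, $p > 1$ with $pb$ inside, with all constants depending only on $\kappa$. The delicate point is the choice of $G$ itself: it must be simple enough that $G_X$ is directly F\"ollmer-integrable against $N$, yet tailored so that (i) $-dG$ cancels the curvature integrand $2(Y^2-X^2)/|Z|^4$ from $L_t$, (ii) the boundary term $G_0 - G_t$ has the right sign via $|Z_t|\geq Y_t\geq y$, and (iii) the residual $d[N]^\pi$-contribution together with the Young penalty and the $-\frac{pb^2}{2}\dot G^2\,d[N]^\pi$ quadratic term can be controlled \emph{pointwise} at the worst case $d[N]^\pi/ds = \kappa$. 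The logarithmic ansatz $G = \log|Z| - a\log Y$ --- essentially Schramm's exponent function --- is essentially forced by these three requirements, and the joint feasibility of $a\in(\kappa/4,1]$, $pb > 2$, and small $\epsilon > 0$ is what underlies the restriction on $\kappa$.
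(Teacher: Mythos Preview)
Your argument is correct and follows the paper's overall strategy: pass to the reverse flow, derive an exact It\^o--F\"ollmer identity for $\log|f_t'(iy+U_t)|$, discard a boundary contribution by sign, handle the finite-energy part via Young's inequality, and reduce to a pointwise drift condition using $d[N]^\pi/ds \le \kappa$.

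The packaging, however, differs in an interesting way. The paper first establishes the identity of Proposition~\ref{singular-nonsingular} with the specific choice $\dot G = 2X/|Z|^2$, and then (Theorem~\ref{thm:keyest}) bounds the combination $\log(Y_t/y) - \log(|Z_t|^2/y^2) + \tfrac{1}{2}\int (\dot Y/Y)\,d[\beta]$ by $(\kappa/2-1)\log(Y_t/y)\le 0$; it is precisely this step that forces $\kappa<2$. You instead apply F\"ollmer--It\^o directly to the one-parameter family $G=\log|Z|-a\log Y$, obtaining $\dot G = X/|Z|^2$ (half the paper's, which is immaterial since the statement is existential in $\dot G$). Your boundary term $G_0-G_t$ is nonpositive for any $a\le 1$, independently of $\kappa$, and the restriction on $\kappa$ enters only through the pointwise coefficient conditions $a\ge\kappa/4$ and $pb\le 1+(4a-2\epsilon)/\kappa$. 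Taking $a=1$, these are jointly feasible with $b>2$, $p>1$ whenever $\kappa<4$ --- so your argument in fact yields the conclusion under the weaker hypothesis $\kappa<4$, an improvement you do not explicitly claim.

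One minor point: your assertion that $\dot G$ is F\"ollmer-integrable against $N$ ``being the gradient of a smooth function evaluated along the flow'' is a little quick, since $\dot G_s$ is not literally of the form $\nabla F(\beta_s)$. The paper handles this by showing $\dot G$ is controlled by $\beta$ in Gubinelli's sense (this is where the hypothesis $\alpha>1/3$ is used); your application of the F\"ollmer--It\^o formula to $G(X_s,Y_s)$ implicitly relies on the same mechanism, and it would be worth saying so.
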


Several remarks are in order.

\begin{remark} $U \in  \mathcal{Q}_T^{\pi,\kappa}$ iff $N \in  \mathcal{Q}_T^{\pi,\kappa}$ since $[N]_t = [\beta]_t = [U]_T - [U]_{T-t}$. 
\end{remark} 
\begin{remark}  An explicit form of $\dot G$ is found in (\ref{eq:Gdot}). Remark that $\dot G_s$ is obtained as function of $(\beta_u: 0 \le u \le s)$, and in fact is controlled by $\beta$ in the sense of Gubinelli \cite{Gub04} or \cite[Ch. 4]{FH14}, which is a technical aspect in the proof. 
\end{remark} 
\begin{remark} We believe the restriction $\kappa < 2$ to be of technical nature.
\end{remark} 

Write $C^{w,1/2}_{0,T}$ for ``weakly" 1/2-H\"older paths on $[0,T]$, started at zero. Following \cite{JVL11}, this means a modulus of continuity of the form $\omega(r) = r^{1/2} \varphi(1/r)$ for a ``subpower" function $\varphi$ (that is, $\varphi(x) = o(x^\nu)$ for all $\nu>0$, as $x\to \infty$). Thanks to L\'evy's modulus of continuity, with probability one, $\sqrt{\kappa} B (\omega) \in C^{{w, 1/2}}_{0,T} \subset C^\alpha_{0,T}$ for any $\alpha < 1/2$. (A general Besov--L\'evy modulus embedding appears e.g. in \cite[p.576]{FV10}.) 

\begin{corollary} \label{cor:1} Let $T>0$ and consider random $U=U(\omega)$ with $U(\omega) \in C^{w,1/2}_{0,T} \cap  \mathcal{Q}_T^\kappa$ for $\kappa < 2$ a.s.
For fixed $t \in [0,T]$, define $\beta$ as before and assume $\beta$ is a continuous semimartingale w.r.t. to some filtration, with canonical decomposition
 $\beta = N + A$ into local martingale $N$ and bounded variation part $A \in \mathcal{H}_t$, so that $ || A ||_t^2 $ has sufficiently high (depending only on $\kappa$) exponential moments finite uniform in $t$. Then the Loewner-trace $\gamma  =: \Phi_{SL} (U)$ exists.
\end{corollary}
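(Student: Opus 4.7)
The plan is to combine the pathwise bound of Theorem \ref{thm:main} with an exponential supermartingale argument, and then to invoke the Rohde--Schramm criterion (Theorem \ref{RS}).

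Fix $t \in [0,T]$. Under the semimartingale assumption, the pathwise F\"ollmer expressions in (\ref{key1}) are identified a.s.\ with their It\^o counterparts (the control of $\dot G$ by $\beta$ noted in Remark 2 is the essential input here), and $[N]^\pi_t = \langle N \rangle_t$. Writing the first factor on the right of (\ref{key1}) as $\mathcal{E}_t$, the strict inequality $p > 1$ implies that $\mathcal{E}_t^p$ is the Dol\'eans--Dade exponential of the continuous local martingale $pb\int_0^\cdot \dot{G}\,dN$, hence a nonnegative supermartingale with $\mathbb{E}[\mathcal{E}_t^p] \leq 1$. H\"older's inequality with conjugate exponents $(p,\, q = p/(p-1))$ then yields
\begin{equation*}
\mathbb{E}\bigl[|f_t'(iy+U_t)|^b\bigr] \,\leq\, \mathbb{E}[\mathcal{E}_t^p]^{1/p}\,\mathbb{E}\bigl[\exp\bigl(\tfrac{qb}{4\epsilon}\|A\|_t^2\bigr)\bigr]^{1/q} \,\leq\, C,
\end{equation*}
uniformly in $y > 0$ and $t \in [0,T]$. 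Finiteness of the second factor is exactly where the ``sufficiently high'' exponential moment hypothesis (with threshold $qb/(4\epsilon)$ depending only on $\kappa$) is used.

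Since $b > 2$, this uniform moment bound combined with the routine H\"older estimate for $|f_t(iy+U_t) - f_t(iy'+U_t)|$ produces a Kolmogorov-type two-parameter bound $\mathbb{E}[|f_t(iy+U_t) - f_t(iy'+U_t)|^b] \leq C|y-y'|^b$. Combined with the continuity of $t \mapsto f_t(iy+U_t)$ for $y > 0$ and a dyadic Borel--Cantelli argument on scales $y_n = 2^{-n}$ in the spirit of \cite{BasicSLE} --- where the weak $1/2$-H\"older regularity of $U$ controls the horizontal displacement of the tip under small time increments --- this yields the a.s.\ existence and continuity in $t$ of $\gamma_t := \lim_{y \to 0^+} f_t(iy+U_t)$. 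Theorem \ref{RS} then identifies $\gamma$ as the Loewner trace.

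The hard part is not the moment estimate itself but rather the $t$-dependence of the decomposition $\beta^{(t)} = N^{(t)} + A^{(t)}$ (and of the underlying filtration): the pathwise bound (\ref{key1}) is produced one $t$ at a time. Joint regularity, needed for the uniform-in-$t$ convergence of the approximations $f_t(iy+U_t)$, must be recovered from the \emph{uniformity in $t$} of the exponential moment hypothesis on $\|A^{(t)}\|_t^2$, together with the $C^{w,1/2}$-regularity of the driver; these are precisely the ingredients that make a Kolmogorov-type argument in both $y$ and $t$ feasible.
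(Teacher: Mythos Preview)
Your proof is correct and follows essentially the same route as the paper: apply H\"older's inequality to the pathwise bound (\ref{key1}), use that the stochastic exponential of $pb\int \dot G\, dN$ is a positive supermartingale with expectation $\le 1$, and conclude a uniform-in-$(t,y)$ bound on $\mathbb{E}[|f_t'(iy+U_t)|^b]$ with $b>2$. The paper then dispatches the final step (moment bound $+$ weak $1/2$-H\"older $\Rightarrow$ trace) by citing the appendix Lemma~\ref{momentbound}, which is exactly the Chebyshev/Borel--Cantelli argument on dyadic scales you sketch in your second and third paragraphs.
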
 

\begin{proof} By assumption, we can apply Theorem \ref{thm:main} to a fixed realization of $U=U(\omega)$ in a set of full measure. Moreover, in view of the assumed semimartingale structure of $\beta = N +A$, our interpretation of the right-hand side of (\ref{key1}) can now be in classical It\^o-sense. (In the semimartingale case, 
the F\"ollmer's integral and quadratic variation, along suitable sequences of partitions, coincide with It\^o's notion.)

With $b>2$ and then $p>1,\epsilon>0$ as in Theorem \ref{thm:main} , let $q$ be the H\"older conjugate of $p$. H\"older's inequality gives
$$
\mathbb{E} [ |f_t'(iy + U_t)|^b]  \le \mathbb{E}\biggl[  \mathcal{E}  \biggl( pb\int_0^t \dot{G}dN \biggr)    \biggr]^{\frac{1}{p}} \mathbb{E} \biggl[\exp\biggl( \frac{qb}{4\epsilon} \int_0^t \dot{A}_r^2dr\biggr)\biggr]^{\frac{1}{q}}
$$
where $\mathcal{E}  (...)$ denotes the stochastic exponential. Since $\dot{G}$ is adapted to $\beta$, its integral against $N$, is again a local martingale and so it the stochastic exponential. By positivity it is also a super-martingale, started at $1$, and thus of expectation less equal one.
Hence, for $b>2$, have $\mathbb{E} [ |f_t'(iy + U_t)|^b]  < \infty$, uniformly in $t \in [0,T]$ and $y>0$. Together with $U \in C^{w,1/2}_{0,T}$ a.s. this is well-known (cf. appendix) to imply existence of trace.
\end{proof} 

Again, some remarks are in order.

\begin{remark} We cannot make a semimartingale asumption for the Loewner driver $U$ since the time-reversal of a semimartingales can fail to be a semimartingale. That said, time-reversal of diffusion was studied by a  number of authors including Millet, Nualart, Sanz, Pardoux ... and sufficient conditions on ``diffusion Loewner drivers" could be given by tapping into this literature. 
\end{remark} 

%

\begin{remark} As revealed by the above proof of the corollary,  the only purpose of the semimartingale assumption of $\beta$ is to get good concentration of measure for $\int \dot{G} d N$. Recent progress on concentration of measure for pathwise stochastic integrals \cite[Ch.11.2]{FH14}, also {\rm [Ch. 5]} for some Gaussian examples of finite QV in F\"ollmer sense, suggest a possibility to study random Loewner evolutions without martingale methods.

\end{remark} 

Theorem \ref{thm:main} and its corollary have little new to say about existence of trace for SLE$_{\kappa}$, especially with its restriction $\kappa < 2$. However, it is capable of dealing with non-Brownian drivers, including situations with non-constant $\kappa$, and $\mathcal{H}$-perturbations thereof. 

\begin{example} {\bf (Classical SLE$_\kappa$)} As a warmup, consider Loewner driver $U = \sqrt{\kappa} B$ with fixed $\kappa < 2$. Since, for fixed $t$, $\beta_s := U_t - U_{t-s}$ defines another Brownian motion, we can trivially decompose with $N = \beta, A \equiv 0$ and thus obtain a.s. existence of trace for SLE$_\kappa$ immediately from the above corollary.
\end{example}

\begin{example} {\bf (non-constant $\kappa$)} Consider measurable $\kappa: [0,T] \to [0,\bar\kappa]$, with $\bar\kappa<2$, and then
$$ 
     U_t = \int_0^t \sqrt{\kappa(s)} dB_s.
$$
A.s. existence of trace for ``SLE$_\kappa$ with non-constant $\kappa$" follows immediately from the above corollary. Remark that for piecewise constant $\kappa$, given by $(\kappa_i)$ on a finite partition of $[0,T]$, this conclusion can also by given by a suitable concatenation argument, relying on a.s. existence of trace for each classical SLE$_{\kappa_i}$.
\end{example}

\begin{example} {\bf ($\mathcal{H}$-perturbations)} Consider, with $h \in \mathcal{H}_T$,
$$ 
     U_t = \sqrt{\kappa} B_t + h_t 
$$ 
Then  $\beta_s :=\sqrt{\kappa} (B_t - B_{t-s}) + h_t  - h_{t-s}$. The corollary applies with Brownian motion, $N_t = \sqrt{\kappa}( B_t - B_{t-s} )$, and deterministic $A_t = h_t  - h_{t-s}$. Remark that a.s. existence of trace, for $\kappa >0$, is also obtained as consequence of existence of trace for classical SLE$_{\kappa}$ and the Cameron--Martin theorem. Modifying the example to 
$$ 
     U_t = \int_0^t \sqrt{\kappa(s)} dB_s + h_t,
$$
without imposing a lower positive bound on $\kappa$, rules out the Cameron--Martin argument, but Corollary \ref{cor:1} still applies and yields existence of trace a.s.
\end{example}
\begin{example} {\bf (Ornstein--Uhlenbeck drivers)} Consider $U_t = Z_t - Z_0$ where $Z$ is a standard OU process, say with dynamics $dZ = - \lambda Z dt + \sqrt{\lambda} dB_t$  started in its invariant measure. By reversibility of this process, the time-reversed driver $\beta$ has the same law. Existence of trace (for SLE driven by such OU processes) is then a consequence of Corollary \ref{cor:1}.
 \end{example}
 
\begin{corollary}\label{F(B)} Consider 
$$
       U_t = F(t, B_t).
$$ 
with 
$$  F=F(t,x) \in C^{1,2}, F(0,0)=0 \text{ and  } |F'(t,x)|^2 \leq {\kappa} < 2.$$
 Assume furthermore, for $\alpha $ large enough (depending only on $\kappa$)
\begin{equation} \label{momentofF}  \mathbb{E}\biggl[ \exp\biggl( \alpha \int_0^T \biggl\{\dot{F}(r, B_{r}) - \frac{1}{2}F''(r, B_{r}) +  F'(r, B_{r})\frac{B_{r}}{r} \biggr\}^2dr \biggr)\biggr] < \infty.
\end{equation}
Then the Loewner-trace $\gamma  =: \Phi_{SL} (U)$ exists.
\end{corollary}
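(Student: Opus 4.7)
The plan is to verify the hypotheses of Corollary~\ref{cor:1} for the driver $U_t := F(t, B_t)$. First I would check that $U \in C^{w,1/2}_{0,T} \cap \mathcal{Q}_T^\kappa$ almost surely. By It\^o's formula,
\begin{equation*}
dU_t = F'(t, B_t)\, dB_t + \bigl[\dot F(t, B_t) + \tfrac{1}{2} F''(t, B_t)\bigr]\, dt,
\end{equation*}
so $d[U]_t = |F'(t, B_t)|^2\, dt \leq \kappa\, dt$, giving $U \in \mathcal{Q}_T^\kappa$. For the weak $1/2$-H\"older regularity: the martingale part is a time-changed Brownian motion (Dambis--Dubins--Schwarz) with time-change bounded by $\kappa t$ and thus inherits L\'evy's modulus, while the drift is a.s.\ Lipschitz since $\dot F$ and $F''$ are continuous, hence bounded on the a.s.\ compact range of $\{(s, B_s) : s \in [0, T]\}$.

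The crux is to produce the semimartingale decomposition of the time-reversed driver $\beta_s := U_t - U_{t-s}$ in a backward filtration and to recognize the drift integrand as the one appearing in (\ref{momentofF}). For this I would invoke the classical Brownian-bridge representation of time-reversed Brownian motion: with respect to the filtration $\mathcal G_s := \sigma(B_u : u \geq t-s)$, the process $Y_s := B_{t-s}$ satisfies
\begin{equation*}
dY_s = -\frac{Y_s}{t-s}\, ds + dW_s, \qquad Y_0 = B_t,
\end{equation*}
for some $(\mathcal G_s)$-Brownian motion $W$. Setting $\Phi(s, y) := F(t-s, y)$ and applying It\^o's formula to $\beta_s = F(t, Y_0) - \Phi(s, Y_s)$ produces
\begin{equation*}
d\beta_s = \Bigl[\dot F(t-s, Y_s) - \tfrac{1}{2} F''(t-s, Y_s) + F'(t-s, Y_s) \tfrac{Y_s}{t-s}\Bigr] ds - F'(t-s, Y_s)\, dW_s,
\end{equation*}
which is exactly the canonical decomposition $\beta = N + A$ required by Corollary~\ref{cor:1}. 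One has $d[N]_s = |F'(t-s, Y_s)|^2\, ds \le \kappa\, ds$, and a change of variable $u = t-r$ inside $\|A\|_t^2 = \int_0^t |\dot A_r|^2\, dr$ gives
\begin{equation*}
\|A\|_t^2 = \int_0^t \Bigl[\dot F(u, B_u) - \tfrac{1}{2} F''(u, B_u) + F'(u, B_u) \tfrac{B_u}{u}\Bigr]^2 du,
\end{equation*}
precisely the integrand of (\ref{momentofF}). Since this integrand is nonnegative and the domain of integration is contained in $[0,T]$, the hypothesis (\ref{momentofF}) with $\alpha$ large enough (depending only on $\kappa$) supplies the uniform-in-$t$ exponential moment bound on $\|A\|_t^2$ demanded by Corollary~\ref{cor:1}, whose conclusion then yields existence of the trace.

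The main technical obstacle I anticipate is the careful deployment of the time-reversal formula (Haussmann--Pardoux style) and the justification of the backward It\^o calculation despite the singular drift $-Y_s/(t-s)$ as $s \uparrow t$. This singularity translates, after the substitution $u = t-r$, into the $B_u/u$ term singular at $u=0$; it is precisely for this reason that (\ref{momentofF}) is imposed as a standing hypothesis, with finiteness ensured by an appropriate decay of $F'$ at the origin (or other cancellations within the bracket) rather than something to be proved. Beyond this, the argument is a clean bookkeeping exercise combining It\^o's formula, time-reversal, and the black-box application of Corollary~\ref{cor:1}.
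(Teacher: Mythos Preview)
Your proof is correct and follows the same overall strategy as the paper: obtain the semimartingale decomposition of $\beta_s = U_t - U_{t-s}$ with the drift $\dot F - \tfrac12 F'' + F'\,B/r$, then invoke Corollary~\ref{cor:1}. The only noteworthy difference is in how that decomposition is produced. The paper starts from the increment process $W_s = B_t - B_{t-s}$, enlarges its natural filtration by $\sigma(B_t)$ (citing the classical initial-enlargement result, e.g.\ \cite{Pro90}), and then combines It\^o's formula with the identity relating forward and backward stochastic integrals of $F'$ to pick up the extra $-F''$ term. You instead work directly with $Y_s = B_{t-s}$ in the backward filtration $\sigma(B_u : u \ge t-s)$ and apply It\^o to $F(t-s,Y_s)$ using the Brownian-bridge SDE for $Y$; this is slightly more streamlined, since the $-\tfrac12 F''$ contribution falls out of the It\^o correction rather than from a separate forward/backward comparison. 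The two filtrations coincide and the two Brownian motions differ only by a sign, so the resulting decomposition---and hence the verification of the hypotheses of Corollary~\ref{cor:1}---is identical.
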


%

\begin{example} Fix $p>0$ and consider, for the sake of argument on $[0,T]$ with $T \le 1$, 
$$ U_t = t^p B_t.
$$
We insist that there is no ``cheap" way to such results. In particular, there is no ``comparison result" for SLE that would yield existence of trace based on $t^p \le 1$ on $[0,1]$ and existence of trace for SLE$_1$, say. (A related question by O. Angel was negatively answered in \cite{LMR10}.)

This is a special case of $U_t = F(t,B_t)$. To apply Corollary \ref{F(B)} one needs to check condition (\ref{momentofF}) which boils down to exponential moments for 
$$
       Z_T := \alpha \int_0^T \{ r^{p-1} B_r \}^2 dr. 
$$
with fixed large $\alpha$, depending on $\kappa$. Note that $\mathbb{E} (Z_T) < \infty$. The centered random variable 
$ Z^c_T := Z_T - \mathbb{E} (Z_T)$ lives in the second homogenous chaos over Wiener-space. Exponential moments
of $Z^c_T$ are then guaranteed, e.g. by using results from \cite[Ch. 5]{Ledoux}, provided that the second moment of $Z_T^c$ is small enough. But this can be achieved by
choosing $T>0$ small enough. 
\end{example}

\begin{example} Consider
$$ U_t = t log( 1 + B_t^2).$$
we leave it to the reader to check that Corollary \ref{F(B)} applies and yields existence of trace on $[0,T]$ with $T$ small enough.
\end{example}
\begin{acknowledgement}
A. S. acknowledges support from the Berlin Mathematical School (BMS). P. F. received funding from the European Research Council under the European 
Union's Seventh Framework Programme (FP7/2007-2013) / ERC grant agreement nr. 258237. Both authors would like to thank 
S. Rohde for numerous discussions.
\end{acknowledgement}

\section{Some exact presentations of $f'$.}

\begin{lemma}
For each fixed $t\geq 0$ and $U \in C[0,t]$, define $ \beta_s = U_t - U_{t-s}$, $0 \le s \le t$. Then 
\begin{equation}\label{crucialformula}\log|f_t'(z+U_t)| = \int_0^t \frac{2(X_r^2 - Y_r^2)}{(X_r^2 + Y_r^2)^2}dr
\end{equation}
where $z = x + iy$ and $(X_s, Y_s), s \in [0,t]$ is the solution of the ODE 
\begin{align}\label{XYODE} dX_s & = d\beta_s - \frac{2X_s}{X_s^2 + Y_s^2} ds , \hspace{2mm} X_0 = x  \\ 
dY_s &= \frac{2Y_s}{X_s^2 + Y_s^2}ds, \hspace{2mm} Y_0 = y .
\end{align}
Moreover, $G_s := \beta_s-X_s$ defines a $C^1$-function with,
 \begin{equation} \label{eq:Gdot} 
  \dot{G}_s  = \frac{2X_s}{X_s^2 + Y_s^2}.
\end{equation}

\end{lemma}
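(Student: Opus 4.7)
The plan is to express both sides of (\ref{crucialformula}) through a \emph{reverse Loewner flow}. Fix $t$ and $z = x+iy \in \mathbb{H}$, and define $\psi_s := g_{t-s} \circ f_t$ for $s \in [0,t]$, so that $\psi_0 = \mathrm{id}$ and $\psi_t = f_t$. Assuming first that $U$ is smooth, the chain rule together with (\ref{LDE}) yields
\[
\partial_s \psi_s(w) \;=\; -\frac{2}{\psi_s(w) - U_{t-s}}.
\]
Introducing the shifted flow $Z_s := \psi_s(z + U_t) - U_{t-s}$, one has $Z_0 = z$ and (since $U_0 = 0$) $Z_t = f_t(z + U_t)$; using $\tfrac{d}{ds} U_{t-s} = -\dot \beta_s$, the equation for $\psi_s$ translates into
\[
\dot Z_s \;=\; -\frac{2}{Z_s} + \dot \beta_s, \qquad Z_0 = z.
\]
For merely continuous $U$, the corresponding integral equation $Z_s = z + \beta_s - 2 \int_0^s dr/Z_r$ admits a unique solution in $\mathbb{H}$: the vector field $-2/Z$ is locally Lipschitz there, and the monotonicity of the imaginary part (noted below) ensures one never reaches the singularity. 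Approximating $U$ by smooth $U^\varepsilon$ and invoking continuity of the Loewner correspondence confirms that this $Z$ still satisfies $Z_t = f_t(z+U_t)$.

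Splitting $Z_s = X_s + iY_s$ and using $1/Z_s = (X_s - iY_s)/(X_s^2 + Y_s^2)$ directly produces the system (\ref{XYODE}). In particular $\dot Y_s \geq 0$, so $Y_s \ge y > 0$ throughout, and $X_s^2 + Y_s^2 \ge y^2$ stays bounded away from zero. For the derivative, differentiating the equation for $\psi_s$ in its spatial argument and evaluating at $w = z+U_t$ gives the linear ODE $\partial_s \psi_s'(w) = 2\psi_s'(w)/Z_s^2$ with $\psi_0'(w) = 1$, hence
\[
f_t'(z+U_t) \;=\; \psi_t'(z+U_t) \;=\; \exp\!\biggl( \int_0^t \frac{2}{Z_s^2}\, ds \biggr),
\]
and taking $\log|\cdot|$ together with $\mathrm{Re}(2/Z_s^2) = 2(X_s^2-Y_s^2)/(X_s^2+Y_s^2)^2$ yields (\ref{crucialformula}).

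Finally, the $C^1$ claim for $G_s = \beta_s - X_s$ is immediate from the integrated form of the $X$-equation:
\[
G_s \;=\; -x + \int_0^s \frac{2X_r}{X_r^2 + Y_r^2}\,dr,
\]
so $\dot G_s = 2X_s/(X_s^2 + Y_s^2)$, and continuity of the integrand makes $G$ of class $C^1$. The main obstacle is the rigorous passage from smooth to continuous $U$ in identifying $Z_t$ with $f_t(z+U_t)$; once that is secured, the remaining steps are routine ODE manipulations on a region where the vector field is smooth and bounded. It is worth noting that $\dot G_s$ depends only on $(X_s,Y_s)$ and not on $\dot \beta_s$, which is precisely what will later allow $G$ to be regarded as a Gubinelli-controlled path over $\beta$, as already alluded to after Theorem \ref{thm:main}.
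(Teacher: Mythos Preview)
Your proof is correct and follows essentially the same route as the paper: both introduce the reverse Loewner flow $g_{t-s}\circ f_t$ (the paper's $P_s$ is just your $Z_s-\beta_s$), differentiate the backward equation in the spatial variable, and read off $\log|f_t'(z+U_t)|$ as $\int_0^t \mathrm{Re}(2/Z_s^2)\,ds$. Your direct integration of the linear ODE for $\psi_s'$ is marginally cleaner than the paper's polar-form computation of $\partial_s\log|P_s'|$, and your caution about passing from smooth to continuous $U$ is in fact unnecessary here since the backward ODE $\dot P_s=-2/(P_s+\beta_s)$ is classically well-posed for continuous $\beta$.
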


\begin{proof}For each $ z \in \mathbb{H}$, the path $ g_{t-s}(f_t(z))$ joins $z$ to $f_t(z)$ as $s $ varies from $0$ to $t$. It is then easy to see that  \[f_t(z + U_t) = P_t(z) + U_t\] where $P_s(z)$ for $ s \in [0, t]$ is the solution of ODE 
\[\dot{P}_s(z) = \frac{-2}{P_s(z) + \beta_s},  \hspace{2mm} P_0(z) = z \]
Writing in polar form, $ P_s' = r_se^{i\theta_s} $, we see that 
 \[ Re(\frac{|P_s'|}{P_s'} \partial_s P_s' ) = Re(e^{-i\theta_s}( e^{i\theta_s} \partial_s r_s + i r_s e^{i\theta_s}\partial_s \theta_s ) ) = \partial_s r_s \] 
 
 So it follows that, \[ \partial_s log|P_s'| = Re( \frac{1}{P_s'} \partial_s P_s') \] 

 Noting that $ \partial_s P_s' = (\partial_s P_s)'  $,
 
 \[ \partial_s log|P_s'| = Re( \frac{1}{P_s'} (\frac{-2}{ P_s + \beta _s } )' ) =  2Re( (P_s + \beta_s)^{-2} )\] 
 
 \[ \implies log|P_s'| = 2 \int_0^s Re( (P_r + \beta_r)^{-2}  )dr \]
 and the claim follows. \\

\end{proof}

\begin{proposition}\label{singular-nonsingular}  Fix $t\geq 0$. Let $U \in C^\alpha$ with $\alpha \in (1/3,1/2]$. With $G,\beta$ as in the previous lemma,
\begin{equation} 
\log |f_t'( z +U_t)| = M_t -  \int_0^t \dot{G}_r^2 dr  + \log(\frac{Y_t}{y})  -\log (\frac{X_t^2 + Y_t^2}{ x^2 + y^2})   \label{e:13}
\end{equation}
where 
$M_t$ is given as rough integral
\begin{equation} \label{equ:MRP}   M_t = \lim_n \sum_{[s,t] \in \pi_n} \dot{G}_s (\beta_t - \beta_s)  + \dot{G}'_s \frac{1}{2}(\beta_t - \beta_s)^2   
\end{equation}
with the Gubinelli derivate $$\dot{G}'_s := {\dot{Y}_s}/{Y_s} - \dot{G}_s^2  .$$
%
%
%
%
If in addition, $U$ (equivalently: $\beta$, as defined in the previous lemma) has continuous finite quadratic-variation in sense of F\"ollmer (along $\pi$) then
\begin{equation} 
\log |f_t'( z +U_t)| = M^\pi_t + \frac{1}{2} \int_0^t \dot{G}'_s d[\beta]^\pi_s -  \int_0^t \dot{G}_r^2 dr  + \log(\frac{Y_t}{y})  -\log (\frac{X_t^2 + Y_t^2}{ x^2 + y^2})   \label{e:13}
\end{equation}
with (deterministic) F\"ollmer--It\^o integral
\begin{equation} \label{IFI}
 M^\pi_t = \lim_n \sum_{[u,v] \in \pi_n} \dot{G}_u (\beta_v - \beta_u) =: \int_0^t \dot{G} d^\pi \beta . \end{equation}
 \end{proposition}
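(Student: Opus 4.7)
The strategy is to combine an elementary algebraic rewriting of the integrand appearing in the previous lemma with an Itô-type change-of-variables formula applied to $\phi(X,Y) := \log\bigl((X^2+Y^2)/Y^2\bigr)$.

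First I would simplify the right-hand side of \eqref{crucialformula}. Using the explicit formula $\dot{G}_s = 2X_s/(X_s^2+Y_s^2)$ from the lemma together with $\dot{Y}_s/Y_s = 2/(X_s^2+Y_s^2)$, a direct computation gives
$$\frac{2(X_s^2-Y_s^2)}{(X_s^2+Y_s^2)^2} \;=\; \dot{G}_s^{\,2} \;-\; \frac{\dot{Y}_s}{Y_s},$$
so that $\log|f_t'(z+U_t)| = \int_0^t \dot{G}_s^{\,2}\,ds - \log(Y_t/y)$. This expresses the quantity of interest as a Lebesgue integral plus a smooth boundary term, with no rough/stochastic integration involved; it is purely a rewriting of the lemma.

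Next I would compute the derivatives of $\phi$: $\phi_X = \dot{G}$, $\phi_Y = 2Y/(X^2+Y^2) - 2/Y$, and $\phi_{XX} = 2(Y^2-X^2)/(X^2+Y^2)^2 = \dot{G}'$. Since $Y$ is $C^1$ (hence of zero quadratic variation and independent of $\beta$ in the rough sense), only $\phi_{XX}$ enters the Itô correction, and the identity $\dot{Y}^2 - 2\dot{Y}/Y = -\dot{G}^2$ (checked by direct substitution) produces a clean cancellation in the drift. In the Föllmer case $U \in \mathcal{Q}^{\pi,\kappa}_T$, since $X = x + \beta - G$ with $G$ of bounded variation, we have $[X]^\pi = [\beta]^\pi$, so Föllmer's pathwise Itô formula applies and yields
$$\phi(X_t,Y_t) - \phi(x,y) \;=\; \int_0^t \dot{G}_s\,d^\pi\beta_s \;-\; 2\int_0^t \dot{G}_s^{\,2}\,ds \;+\; \tfrac{1}{2}\int_0^t \dot{G}'_s\,d[\beta]^\pi_s.$$
Substituting into the base identity from the previous step and rearranging produces the second displayed formula of the proposition.

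For part (i) (Hölder driver, no quadratic variation assumed) I would replace Föllmer's formula by the Itô formula for one-dimensional geometric rough paths, where the iterated integral of $\beta$ is taken to be $\tfrac{1}{2}(\beta_v-\beta_u)^2$. The key technical point is to verify that $\dot{G}$ is controlled by $\beta$ in the sense of Gubinelli, with Gubinelli derivative $\dot{G}'$: since $X = x + \beta - G$ with $G \in C^1$, $X$ is controlled by $\beta$ with derivative $\equiv 1$; $Y$ is smooth so its Gubinelli derivative is zero; and $\dot{G} = 2X/(X^2+Y^2)$ is a smooth function of $(X,Y)$, so the composition rule for controlled paths gives the Gubinelli derivative $\partial_X \dot{G} \cdot 1 = \dot{G}'$. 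The sewing lemma then defines the rough integral as the compensated Riemann sum in \eqref{equ:MRP}, which is precisely $M_t$. In the geometric setting the rough Itô/chain-rule formula carries no explicit $d[\beta]$ correction — this correction is absorbed into the second-order term $\dot{G}'_u \tfrac{1}{2}(\beta_v-\beta_u)^2$ inside $M_t$ — and the same algebraic rearrangement as above yields the first displayed formula.

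The main obstacle I expect is the rough-path step, specifically verifying that $\dot{G}$ is controlled by $\beta$ with the stated Gubinelli derivative and that the sewing limit coincides with the compensated Riemann sum in \eqref{equ:MRP}. This requires the chain rule for controlled paths applied to the smooth but non-globally-bounded function $2X/(X^2+Y^2)$, together with careful tracking of Hölder regularity of $X$, $Y$ and $\dot G$; the threshold $\alpha > 1/3$ is exactly what makes the relevant two-parameter germ admissible for sewing. The Föllmer case is more straightforward once one notes $[X]^\pi = [\beta]^\pi$ and applies the pathwise Itô formula.
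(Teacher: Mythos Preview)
Your proposal is correct and follows essentially the same route as the paper: both arguments rest on the algebraic identities relating $\dot G$, $\dot Y/Y$ and the integrand from the previous lemma, together with an It\^o-type change of variables and the observation that $\dot G$ is controlled by $\beta$ with Gubinelli derivative $\dot G' = \partial_X\bigl(2X/(X^2+Y^2)\bigr)$. Your packaging via the single function $\phi(X,Y)=\log\bigl((X^2+Y^2)/Y^2\bigr)$ is slightly more streamlined than the paper's direct differential manipulation in the $C^1$ case followed by extension, but the substance is the same.
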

 \begin{remark}
 When $U \in \mathcal{H}$, i.e. in the case of finite energy driver, $[\beta] \equiv 0$ and $M \equiv M^\pi$ reduces to a classical Riemann-Stieltjes integral.
 \end{remark}
 
\begin{proof} 
 Consider first the case of $U$ (equivalently: $\beta$) in $C^1$. Then
\begin{align*}
& \qquad \dot{G}_rd\beta_r - \frac{1}{2}\dot{G}_r^2dr + \frac{Y_rdY_r}{X_r^2 + Y_r^2} 
- \frac{2X_rdX_r + 2Y_rdY_r}{X_r^2 + Y_r^2}   \\ & = \frac{2X_r}{X_r^2 + Y_r^2}d\beta_r - \frac{2X_r^2}{(X_r^2 + Y_r^2)^2}dr -\frac{2X_rdX_r}{X_r^2 + Y_r^2} -\frac{Y_rdY_r}{X_r^2 + Y_r^2} \\ &= \frac{2X_r}{X_r^2 + Y_r^2}d(\beta_r - X_r)
- \frac{2X_r^2}{(X_r^2 + Y_r^2)^2}dr -  \frac{2Y_r^2}{(X_r^2 + Y_r^2)^2}dr \\ &=  \frac{4X_r^2}{(X_r^2 + Y_r^2)^2}dr - \frac{2X_r^2}{(X_r^2 + Y_r^2)^2}dr -  \frac{2Y_r^2}{(X_r^2 + Y_r^2)^2}dr \\ &= \frac{2(X_r^2-Y_r^2)}{(X_r^2 + Y_r^2)^2}dr
\end{align*}

Next note that \[ \frac{1}{2}\dot{G}_r^2dr + \frac{1}{2}\dot{Y}_r^2dr =  \frac{\dot{Y}_r}{Y_r}dr\]and \[\frac{Y_rdY_r}{X_r^2 + Y_r^2} = \frac{1}{2}\dot{Y}_r^2dr =  \frac{\dot{Y}_r}{Y_r}dr - \frac{1}{2}\dot{G}_r^2dr \]
Putting all together, we get 
\[ \frac{2(X_r^2-Y_r^2)}{(X_r^2 + Y_r^2)^2}dr = \dot{G}_rd\beta_r - \dot{G}_r^2dr + \frac{\dot{Y}_r}{Y_r}dr - \frac{2X_rdX_r + 2Y_rdY_r}{X_r^2 + Y_r^2}   \]and integrating both side, the claim follows with $M_t = \int_0^t \dot{G_s} d\beta_s$.
In the case of rough driver, meaning $U$ (equivalently: $\beta$) in $C^\alpha$ with $\alpha > 1/3$ the idea is to exploit a cancellation between 
$ \dot{G}_rd\beta_r $ and $-  \frac{2X_rdX_r}{X_r^2 + Y_r^2}$. We can in fact rely on basic theory of controlled rough path to see existence of the rough integral $M_t$. It suffices
to note that the integrand $\dot{G}$ is controlled by the integrator $\beta$. To see this, write 
 \[ \dot{G}_s  = \frac{2X_s}{X_s^2 + Y_s^2} =: \varphi (X_s, Y_s) \]
 and since $\varphi$ is smooth and well-defined (as long as $y>0$ fixed), and $Y$ plainly Lipschitz, it is easy to see that (or just apply directly Exercise 7.8 in \cite{FH14}) 
 
 \[ \dot{G}_s - \dot{G}_r  = \partial_x \varphi (X_s, Y_s) ({X}_s - {X}_r) + O (|s-r|^{2\alpha}) =\partial_x \varphi (X_s, Y_s) ({\beta}_s - {\beta}_r) + O (|s-r|^{2\alpha}) \]
 
 which guarantees existence of (\ref{equ:MRP}) as rough path integral (Theorem 4.10 in \cite{FH14}). The second part concerning the splitting into It\^o--F\"ollmer integral and quadratic variation part, is similar to  \cite[Ch. 5.3]{FH14}, using in particular {\rm Lemma 5.9.}.
 \end{proof}

\section{A deterministic estimate on $f'$.}

\begin{theorem} \label{thm:keyest} In the context of Proposition \ref{singular-nonsingular}, with continuous finite quadratic-variation in sense of F\"ollmer so that
$  d [\beta]^\pi_s / ds \le \kappa < 2 $ one has the following estimate
\begin{align*}
 |f_t'( iy + U_t)| &  \leq \exp \biggl[  M^\pi_t  -  \int_0^t \dot{G}_r^2   d(r +  \tfrac{1}{2}[\beta]_r) \biggr]
\end{align*}
where $\int_0^t \dot{G}_r d^\pi \beta_r = M^\pi_t$ is the It\^o-F\"ollmer type integral introduced in (\ref{IFI}).
\end{theorem}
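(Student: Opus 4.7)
The plan is to take the exact representation from Proposition \ref{singular-nonsingular}, specialize it to $z = iy$ (so $x = 0$), expand the Gubinelli derivative $\dot{G}'_s = \dot{Y}_s/Y_s - \dot{G}_s^2$, and then show that all the extra terms (beyond $M^\pi_t - \int_0^t \dot{G}_r^2\,d(r+\tfrac12[\beta]_r)$) contribute non-positively, which is where the hypothesis $\kappa<2$ enters.

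Concretely, setting $x=0$ in the Proposition gives
\[
\log |f_t'(iy+U_t)| = M^\pi_t + \tfrac{1}{2}\!\int_0^t\!\dot{G}'_s\,d[\beta]^\pi_s - \!\int_0^t\!\dot{G}_r^2\,dr + \log\!\tfrac{Y_t}{y} - \log\!\tfrac{X_t^2+Y_t^2}{y^2}.
\]
Substituting $\dot{G}'_s = \dot{Y}_s/Y_s - \dot{G}_s^2$ splits the $[\beta]$-integral into
\[
\tfrac{1}{2}\!\int_0^t \tfrac{\dot{Y}_s}{Y_s}\,d[\beta]^\pi_s \;-\; \tfrac{1}{2}\!\int_0^t \dot{G}_s^2\,d[\beta]^\pi_s,
\]
and the second piece combines with $-\int_0^t\dot{G}_r^2\,dr$ to produce exactly the target term $-\int_0^t\dot{G}_r^2\,d(r+\tfrac12[\beta]_r)$. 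It then remains to prove
\[
\tfrac{1}{2}\!\int_0^t \tfrac{\dot{Y}_s}{Y_s}\,d[\beta]^\pi_s + \log\!\tfrac{Y_t}{y} - \log\!\tfrac{X_t^2+Y_t^2}{y^2} \;\le\; 0.
\]

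For this final inequality I would use two simple ingredients. First, from the $Y$-equation, $\dot{Y}_s/Y_s = 2/(X_s^2+Y_s^2)\ge 0$, so $Y$ is non-decreasing and $\log(Y_t/y)\ge 0$; moreover the hypothesis $d[\beta]^\pi_s/ds\le \kappa$ and non-negativity of the integrand give
\[
\tfrac{1}{2}\!\int_0^t \tfrac{\dot{Y}_s}{Y_s}\,d[\beta]^\pi_s \;\le\; \tfrac{\kappa}{2}\!\int_0^t \tfrac{\dot{Y}_s}{Y_s}\,ds \;=\; \tfrac{\kappa}{2}\log\!\tfrac{Y_t}{y}.
\]
Second, the trivial bound $X_t^2+Y_t^2 \ge Y_t^2$ yields $\log\bigl((X_t^2+Y_t^2)/y^2\bigr) \ge 2\log(Y_t/y)$. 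Substituting both gives
\[
\tfrac{\kappa}{2}\log\!\tfrac{Y_t}{y} + \log\!\tfrac{Y_t}{y} - 2\log\!\tfrac{Y_t}{y} \;=\; \bigl(\tfrac{\kappa}{2}-1\bigr)\log\!\tfrac{Y_t}{y} \;\le\; 0,
\]
the last step using $\kappa<2$ together with $\log(Y_t/y)\ge 0$. Exponentiating produces the claimed estimate.

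The mildly delicate step is the first ingredient: one must justify the pointwise-in-$d[\beta]$ comparison $\int \phi\, d[\beta]^\pi_s \le \kappa \int \phi\, ds$ for the continuous non-negative integrand $\phi_s = \dot{Y}_s/Y_s$, which follows because $[\beta]^\pi$ is continuous and absolutely continuous w.r.t.\ Lebesgue measure with Radon--Nikodym derivative bounded by $\kappa$ (as guaranteed by the hypothesis \eqref{LipFB}). Apart from that, the argument is a transparent rearrangement revealing precisely why $\kappa=2$ is the borderline case for this deterministic estimate.
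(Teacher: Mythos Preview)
Your proof is correct and follows essentially the same route as the paper: specialize the F\"ollmer-version of the representation in Proposition~\ref{singular-nonsingular} to $x=0$, expand $\dot G'_s=\dot Y_s/Y_s-\dot G_s^2$, and bound the leftover terms by $(\tfrac{\kappa}{2}-1)\log(Y_t/y)\le 0$ using $X_t^2+Y_t^2\ge Y_t^2$ together with $d[\beta]^\pi_s\le \kappa\,ds$ on the non-negative integrand $\dot Y_s/Y_s$. Your added justification of the $d[\beta]^\pi$-vs-$ds$ comparison is a welcome clarification but does not depart from the paper's argument.
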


\begin{proof}

From (\ref{e:13}) and definition for $G'$, also taking $x=0$ so that $z = iy$ (i.e. $x=0$),


\begin{align*}
\log| f_t'(iy + U_t)| &= \int_0^t \dot{G}_rd\beta_r -\int_0^t \dot{G}_r^2dr - \frac{1}{2}\int_0^t \dot{G}_r^2d[\beta]_r \\ 
& + \log(\frac{Y_t}{y}) - \log(\frac{X_t^2 + Y_t^2}{y^2}) + \frac{1}{2}\int_0^t \frac{\dot{Y}_r}{Y_r}d[\beta]_r
\end{align*}  
Using positivity of $\dot{Y}_r / {Y_r}$,
\begin{align*}\label{dropterm}
&\log(\frac{Y_t}{y}) - \log(\frac{X_t^2 + Y_t^2}{y^2}) + \frac{1}{2}\int_0^t \frac{\dot{Y}_r}{Y_r}d[\beta]_r \\ & \leq \log(\frac{Y_t}{y}) - 2\log(\frac{Y_t}{y}) + \frac{\kappa}{2} \int_0^t \frac{\dot{Y}_r}{Y_r}dr \\ &= (\frac{\kappa}{2}- 1)\log(\frac{Y_t}{y}) \\ & \leq 0.
\end{align*}
and the desired estimate follows. 
\end{proof}

\section{Proof of Theorem \ref{thm:main}}    

As immediate corollary of Theorem \ref{thm:keyest}, noting
\begin{align*}
\int_0^t \dot{G}_r^2 dr \geq \frac{1}{\kappa} \int_0^t \dot{G}_r^2 d[\beta]^\pi_r , 
\end{align*}
we obtain the (still pathwise) estimate

\begin{equation} \label{estLM}
 |f_t'( iy + U_t)|  \leq \exp \biggl[  \int_0^t \dot{G}_rd^\pi \beta_r - \biggl( \frac{1}{2} + \frac{1}{\kappa} \biggr)\int_0^t \dot{G}_r^2d[\beta]^\pi_r \biggr].  
\end{equation}
and then, with $b := 2 (\frac{1}{2} + \frac{1}{\kappa} )$ 
\begin{align*}
 |f_t'( iy + U_t)|^b & \leq \exp \biggl[ b \int_0^t \dot{G}_rd^\pi \beta_r -  \frac{b^2}{2} \int_0^t \dot{G}_r^2d[\beta]^\pi_r \biggr].  
\end{align*}
Note $b>2$, a consequence of $\kappa <2$. We now assume that, for some $A \in \mathcal{H}_t$,

$$
\beta = N + A
$$
It is then immediate, using $[\beta] = [N]$, that
$$ |f_t'( iy + U_t)|  \leq \exp \biggl[  \int_0^t \dot{G}_r dN^\pi_r + (*) - \int_0^t \dot{G}_r^2   d(r +  \tfrac{1}{2}[N]^\pi_r) \biggr] 
$$
where
$$
(*) = \int_0^t \dot{G}_r d A_r = \int_0^t \dot{G}_r\dot{A}_rdr \leq \epsilon \int_0^t \dot{G}_r^2dr + \frac{1}{4\epsilon} \int_0^t \dot{A}_r^2 dr.
$$
As a consequence, arguing exactly like in obtaining (\ref{estLM})
$$ |f_t'( iy + U_t)|  \leq \exp \biggl[  \int_0^t \dot{G}_r dN^\pi_r -   \biggl( \frac{1-\epsilon}{\kappa} + \frac{1}{2} \biggr)   \int_0^t \dot{G}_r^2   d[N]_r) \biggr] . \exp \biggl[\frac{1}{4\varepsilon} || A ||_{t}^2\biggr].
$$

\section{Finite energy drivers, proof of Theorem \ref{thm:CM} }

We show (i) estimate (\ref{eq:CM}), (ii) existence of Loewner-trace $\gamma$ as a simple curve, (iii) uniform $1/2$-H\"older regularity of $\gamma$, (iv) Lipschitz continuity of $ \gamma(t^2)$, (v) continuity of the Schramm--Loewner in uniform topology, on bounded sets in $\mathcal{H}_T$ and (vi) continuity of the trace in $1+ \epsilon$-variation topology w.r.t. Cameron-Martin topology on the driver.  

(i) The proof of estimate (\ref{eq:CM}) is a straight-forward consequence of Theorem \ref{thm:keyest}. 
Indeed, let $U$ of finite energy on $[0,t]$, note that $U$ and $\beta$ (with $\beta_s := U_t - U_{t-s}$ here) have zero quadratic variation. Then 
\begin{align*}
\log|f_t'( iy + U_t)| & \leq M_t -  \int_0^t \dot{G}_r^2 dr
\end{align*}
and conclude with \begin{align*} M_t  = \int_0^t \dot{G}_r d\beta_r & = \int_0^t \dot{G}_r\dot{\beta}_rdr \\   &\leq  \int_0^t \dot{G}_r^2dr + \frac{1}{4} \int_0^t \dot{\beta}_r^2dr. 
\end{align*}

In fact, from Proposition \ref{singular-nonsingular}, since $\beta$ has zero quadratic variation,  
\begin{equation*}
\log |f_t'( z +U_t)| = \int_0^t \dot{G}_rd \beta_r  -  \int_0^t \dot{G}_r^2 dr  + \log(\frac{Y_t}{y})  -\log (\frac{X_t^2 + Y_t^2}{ x^2 + y^2})   \label{e:13}
\end{equation*}

and using the same argument as above, we obtain a better bound  
\begin{equation}\label{finerCMbound}
|f_t'(x + iy + U_t)| \leq \frac{y}{Y_t}\biggl( 1 + \frac{x^2}{y^2}\biggr)\exp\biggl[\frac{1}{4}||U||_t^2\biggr]
\end{equation}
which implies $|f_t'(z + U_t)|$ remains bounded if $z$ remains in a cone $\{z | |Re(z)| \leq M Im(z)\}$

(ii) This is clear from part (i) and Lemma \ref{decaylemma} in the appendix, where it is shown
\[ \gamma_t = \lim\limits_{y \rightarrow 0+ } f_t(iy + U_t)\] exists as a continuous limit. The fact that $\gamma$ is simple follows e.g. from \cite{Lind} or \cite{LIPGRAPHhuy}.

(iii) We show that \[ ||\gamma||_{\frac{1}{2}} \leq g ( ||U||_{T})\] for some continuous function $ g : [0, \infty) \rightarrow (0 , \infty)$. (In fact, in the proof below reveals the possible choice $g(x) = Ce^{Cx^2}$.) %
%
%
%
%
%
%
 Define \[ v(t,y) := \int_0^y |f_t'(ir + U_t)|dr\]Note that, \[|\gamma(t) - f_t(iy + U_t)| \leq v(t,y)\]
and by an application of Koebe's one-quater Theorem, 

\begin{equation}\label{koebe}
 v(t,y) \geq \frac{y}{4} |f_t'(iy + U_t)| 
\end{equation}

In the proof below, we will choose $ y = \sqrt{t-s}$. Now, 
\begin{align*}
|\gamma(t) - \gamma(s)| \leq & |\gamma(t) - f_t(U_t + iy) | \\ & + |\gamma(s) - f_s(U_s + iy) |   \\
&  + |f_t(U_s + iy) - f_s(U_s+ iy)| \\ & + | f_t(U_t + iy) - f_t(U_s + iy)|    
\end{align*}

The first two terms are bounded by $ v(t,y)$ and $v(s,y)$ respectively. For the third term, Lemma $3.5$ in \cite{JVL11} and (\ref{koebe}) implies, 
\[ |f_t(U_s + iy) - f_s(U_s+ iy)| \leq C v(s,y)\]

For the fourth term, \[ | f_t(U_t + iy) - f_t(U_s + iy)|  \leq |U_t - U_s| \sup_{r \in [0,1]}|f_t'(rU_t + (1-r)U_s + iy )|\]

Note that \[|U_t - U_s| \leq y ||U||_{\frac{1}{2}}   \] and by Lemma $3.6 $ in \cite{JVL11}, there exist constant $C$ and $ \alpha $ such that 
\begin{align*}
|f_t'(rU_t + (1-r)U_s + iy )| & \leq C \max\biggl\{1,\biggl(\frac{|U_t - U_s|}{y}\biggr)^\alpha\biggr\} |f_t'(iy + U_t)| \\ & \leq
C \max\biggl\{1, ||U||_{\frac{1}{2}}^{\alpha}\biggr\} |f_t'(iy + U_t)| 
\end{align*} 
and using (\ref{koebe}) again, 
\[ | f_t(U_t + iy) - f_t(U_s + iy)| \ \leq C ||U||_{\frac{1}{2}} \max\biggl\{1, ||U||_{\frac{1}{2}}^\alpha \biggr \} v(t,y)\] 
Finally, from part (i) 
 \[v(t,y) \leq y \exp\biggl\{\frac{1}{4}||U||_{T}^2 \biggr\} \] \[ ||U||_{\frac{1}{2}} \leq ||U||_{T} \] 
giving us 
\[ |\gamma_t - \gamma_s| \leq \sqrt{t-s} g(||U||_{T})\]
completing the proof. \\

(iv) We will use the results from \cite{LIPGRAPHhuy} for the proof of this part. In particular, we recall from Theorem $3.1$ in \cite{LIPGRAPHhuy} that if $||U||_{\frac{1}{2}} < 4$, then there exist a $\sigma, c  > 0$ such that for all $y > 0$,  \begin{equation}\label{huyestimate} \sigma \sqrt{t} \leq Im(f_t(i y + U_t)) \leq \sqrt{y^2 + 4t}  
\end{equation} 
and from Lemma $2.1$ in \cite{LIPGRAPHhuy} \[ |Re(f_t(i y + U_t))| \leq c \sqrt{t} \]
so that trace $\gamma$ lies inside a cone at 0 and $ |f_t(i \sqrt{t} + U_t )| \leq c\sqrt{t}$. We first assume that $||U||_{\frac{1}{2}, [0,T]} <4$. From the proof of part (iii), we have \[ |\gamma_t - \gamma_s |  \lesssim v(t, \sqrt{t-s}) + v(s, \sqrt{t-s}) \]If $ s,t \geq \epsilon$, using bound \ref{finerCMbound} 
 \[ v(t, \sqrt{t-s}) +  v(s, \sqrt{t-s}) \lesssim (\frac{1}{Y_t} + \frac{1}{Y_s}) (t-s) \lesssim \frac{1}{\sqrt{\epsilon}}(t-s)\]which implies $\gamma$ is Lipchitz on $[\epsilon, T]$. For proving $|\gamma_{t^2} - \gamma_{s^2}| \lesssim |t-s|$, note that we can assume $s= 0$, for otherwise we can consider the image of $\gamma$ under conformal map $g_{s^2} - U_{s^2}$ whose derivative of  the inverse $ f'(. + U_{s^2})$ remains bounded in a cone. Finally again using \ref{finerCMbound} and \ref{huyestimate}, 
\begin{align*}
|\gamma_{t^2}| &\leq |\gamma_{t^2} - f_{t^2}(it + U_{t^2})| + |f_{t^2}(it + U_{t^2})| \\ & \lesssim 
v(t^2, t) + t  \\ & \lesssim \frac{t^2}{Y_{t^2}} + t \\ & \lesssim t
\end{align*}

Finally, if $||U||_{\frac{1}{2}, [0,T]} \geq 4$, we split $ [0,T] = \cup_{k =0}^{n-1} [\frac{kT}{n}, \frac{(k+1)T}{n}]$ such that for each $k$, \\$||U||_{\frac{1}{2}, [\frac{kT}{n}, \frac{(k+1)T}{n}]} < 4$. Note again that \[ \gamma[0,T] = \gamma[0,T/n]\cup f_{\frac{T}{n}}g_{\frac{T}{n}}( \gamma[T/n, T])\] 
From above, $\gamma(t^2)$ is Lipchitz on $[0,T/n]$. The chain $ g_{T/n}(\gamma_{T/n + t}) - U_{T/n}, t \in [0, T/n]$ is driven by $ U_{T/n+ t} - U_{T/n}$ and since $f_{T/n}'(. + U_{T/n})$ is bounded (from \ref{finerCMbound} and the fact that trace remains in a cone), $\gamma(t^2)$ is also Lipchitz on $[T/n, 2T/n]$. Iterating this argument then completes the proof.

(v) Consider If $U^n $ is a sequence of Cameron-Martin paths with $||U^n-U||_\infty \rightarrow 0$ and \[ \sup_n ||U^n||_{T} + ||U||_{T} < \infty\] We need to show that  
for any $ \alpha < \frac{1}{2}$, \[||\gamma^n - \gamma||_{\alpha} \rightarrow 0.\]

We have, 
\begin{align*}
|\gamma^n(t) - \gamma(t)| \leq & |\gamma^n(t) - f_t^n(iy + U_t^n)| \\ &+  |f_t(iy + U_t) - \gamma(t)| \\ &+ |f_t^n(iy + U_t^n) - f_t(iy + U_t)|  
\end{align*}
Note that for fixed $y > 0$, \[ \lim\limits_{ n \rightarrow \infty}
|f_t^n(iy + U_t^n) - f_t(iy + U_t)| = 0 \]uniformly in $t $ on compacts. From part (i) 
\begin{align*}
|\gamma^n(t) - f_t^n(iy + U_t^n)| + |f_t(iy + U_t) - \gamma(t)| & \leq v^n(t, y) + v(t,y) \\ & \leq y \biggl( \exp\biggl\{\frac{1}{4}||U^n||_{T}^2 \biggr\} +  \exp\biggl\{\frac{1}{4}||U||_{T}^2 \biggr\} \biggr )
\end{align*} 
Thus, 
 \[ \lim\limits_{y \rightarrow 0+ } |\gamma^n(t) - f_t^n(iy + U_t^n)| + |f_t(iy + U_t) - \gamma(t)| = 0 \]uniformly in $n$ and $t$. Since $y$ can be chosen arbitrarily small, \[ \lim\limits_{n \rightarrow \infty} 
||\gamma^n - \gamma||_{\infty} = 0\]
Finally note that from part (iii) 
 \[\sup_n ||\gamma^n||_{\frac{1}{2}} < \infty \] and standard interpolation argument concludes the proof.

(vi) Let $U^n$ is a sequence with $||U^n - U||_{T} \rightarrow 0$ as $n \to \infty$. From part (v), we have $||\gamma^n - \gamma||_{\infty} \to 0 $. We claim that $\sup_n ||\gamma^n||_{1-var} < \infty$, which together with standard interpolation argument implies $||\gamma^n - \gamma||_{1+ \epsilon-var} \to 0$ as $n \to \infty$. From the proof of part (iv), we see that if $||U||_{\frac{1}{2}} < 4 - \delta $ (and thus $||U^n||_{\frac{1}{2}} < 4 - \delta$ for $n$ large enough), then $\gamma^n(t^2)$ is  Lipchitz uniformly in $n$ and thus $\sup_n ||\gamma^n||_{1-var} < \infty$. \\
If $||U||_{\frac{1}{2}} \geq 4 $, we choose a $m$ large enough and dissect $ [0,T] = \cup_{k=0}^{m-1}[\frac{kT}{m}, \frac{(k+1)T}{m}]$ such that for all $n$ and $ k \leq m-1$, $||U^n||_{\frac{1}{2}, [\frac{kT}{m}, \frac{(k+1)T}{m}]} < 4 - \delta$ 
and similar iteration argument as in proof of part (iv) again implies  $\sup_n ||\gamma^n||_{1-var} < \infty$, completing the proof.

\section{Proof of Corollary \ref{F(B)} }

We consider Loewner drivers of the form $U_t = F(t, B_t)$ where $B$ is a standard Brownian motion. For a fixed time $t > 0$, the process $\beta_s = \beta_s^t = U_{t} - U_{t-s}$ is the time reversal of $U$. Note that $ W_s = B_t - B_{t-s}$ is another Brownian motion and a martingale w.r.t. to its natural completed filtration $\mathcal{F}_s $ satisfying usual hypothesis. We recall the following classical result on expansion of filtration. See \cite[Ch. 6]{Pro90} for detaills.

\begin{theorem}
Brownian motion $W$ remains a semimartingale w.r.t. expanded filtration $\tilde{\mathcal{F}}_s := \mathcal{F}_s \vee \sigma(W_t) = \mathcal{F}_s \vee \sigma(B_t) $. Moreover, \[ W_s = \tilde{W}_s + \int_0^s \frac{W_t - W_r}{t-r} dr \]
where $\tilde{W}$ is a Brownian motion adapted to the filtration $\tilde{\mathcal{F}}$.
\end{theorem}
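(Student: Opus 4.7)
The plan is to treat this as a textbook instance of initial enlargement of filtration and reprove it via Jacod's theorem, with $L = W_t$ as the additional random variable. The starting point is that for each $s < t$, the regular conditional law of $W_t$ given $\mathcal{F}_s$ is $\mathcal{N}(W_s, t-s)$, which is absolutely continuous with respect to the unconditional law $\eta = \mathcal{N}(0, t)$ with explicit density
$$
q_s^y \;=\; \sqrt{\tfrac{t}{t-s}}\,\exp\!\Bigl(\tfrac{y^2}{2t} - \tfrac{(y-W_s)^2}{2(t-s)}\Bigr).
$$
This verifies Jacod's criterion, and Jacod's theorem then guarantees that every $\mathcal{F}$-local martingale (in particular $W$ itself) is an $\tilde{\mathcal{F}}$-semimartingale on $[0,t)$, with canonical drift
$$
\mu_s \;=\; \int_0^s \frac{d\langle W, q^y\rangle_r}{q_r^y}\bigg|_{y=W_t}.
$$

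To compute $\mu_s$ explicitly, I would apply It\^o's formula to $q_r^y$ for each fixed $y$. Because the Gaussian kernel satisfies the backward heat equation $\partial_r p + \tfrac{1}{2}\partial_x^2 p = 0$, every bounded-variation term cancels and one obtains the clean representation
$$
dq_r^y \;=\; q_r^y\,\frac{y-W_r}{t-r}\, dW_r,
$$
so that $d\langle W, q^y\rangle_r / q_r^y = (y-W_r)/(t-r)\,dr$. After the measurable substitution $y \mapsto W_t$, the drift becomes exactly $\int_0^s (W_t-W_r)/(t-r)\,dr$, matching the claim.

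It remains to identify $\tilde{W}_s := W_s - \int_0^s (W_t - W_r)/(t-r)\,dr$ as a Brownian motion. By Jacod's theorem this is a continuous $\tilde{\mathcal{F}}$-local martingale on $[0,t)$; since the subtracted drift has finite variation, $\langle \tilde{W}\rangle_s = \langle W\rangle_s = s$, and L\'evy's characterization upgrades $\tilde{W}$ to a standard $\tilde{\mathcal{F}}$-Brownian motion. The main technical obstacles are (a) the measurable substitution $y \mapsto W_t$ inside Jacod's abstract drift formula, which requires a jointly-measurable Fubini argument on $[0,s]\times\Omega\times\mathbb{R}$, and (b) the singularity of $(t-r)^{-1}$ at the terminal time $r = t$: the semimartingale decomposition is only asserted on $[0, t)$, although a standard time-reversal shows $\int_0^s (W_t - W_r)/(t-r)\,dr$ in fact has an a.s.\ finite limit as $s \uparrow t$. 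An alternative approach would bypass Jacod entirely by using that, conditional on $W_t = y$, the process $(W_r)_{r \le t}$ is a Brownian bridge and hence solves $dW_r = (y-W_r)/(t-r)\,dr + d\tilde{W}_r$; integrating over $y$ against the law of $W_t$ yields the same decomposition.
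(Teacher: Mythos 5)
Your argument is correct, but note that the paper does not prove this statement at all: it is recalled as a classical fact on initial enlargement of filtrations with a pointer to Protter \cite[Ch.~6]{Pro90}, and the decomposition is then simply used in the proof of Corollary \ref{F(B)}. What you supply is therefore a self-contained derivation of the cited result, and it is the standard one: Jacod's criterion is verified with the explicit Gaussian density $q_s^y$ of $\mathcal{L}(W_t\mid\mathcal{F}_s)=\mathcal{N}(W_s,t-s)$ with respect to $\mathcal{N}(0,t)$, the backward heat equation gives $dq_r^y=q_r^y\,\frac{y-W_r}{t-r}\,dW_r$ so that Jacod's drift formula produces exactly $\int_0^s\frac{W_t-W_r}{t-r}\,dr$ after the substitution $y=W_t$, and L\'evy's characterization identifies $\tilde W$. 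Your two flagged technical points are the right ones, and both are harmless here: the substitution is legitimate because the Gaussian kernel gives a version of $(r,\omega,y)\mapsto q_r^y$ that is jointly measurable and smooth, and the decomposition extends from $[0,t)$ to the closed interval since $\mathbb{E}\int_0^t\frac{|W_t-W_r|}{t-r}\,dr=c\int_0^t(t-r)^{-1/2}\,dr<\infty$, so the drift is a.s.\ absolutely convergent and $\tilde W$ extends continuously (no time-reversal argument is needed for this). The alternative Brownian-bridge sketch at the end is the intuition behind the formula but, as stated, ``integrating over $y$'' is informal; the Jacod computation you give first is the rigorous route, and it is essentially what the cited reference does.
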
 

We now prove that $\beta_s$ is a semimartingale w.r.t. to filtration $\tilde{\mathcal{F}}$ and provide its explicit decomposition into martingale and bounded variation part. More precisely, we claim
\begin{align*}
\beta_s &=  \int_0^s F'(t-r, B_{t-r})dW_r + \int_0^s \biggl(\dot{F}(t-r, B_{t-r}) - \frac{1}{2}F^{''}(t-r, B_{t-r})\biggr)dr \\ 
&= \int_0^s F'(t-r, B_{t-r})d\tilde{W}_r \\  & \qquad+ \int_0^s \biggl(\dot{F}(t-r, B_{t-r}) - \frac{1}{2}F^{''}(t-r, B_{t-r}) + F'(t-r, B_{t-r})\frac{B_{t-r}}{t-r} \biggr)dr
\end{align*}
To see this, just use It\^o's formula, \[\beta_s = \int_{t-s}^t F'(r, B_r )dB_r + \int_0^s \biggl(\dot{F}(t-r, B_{t-r}) + \frac{1}{2}F^{''}(t-r, B_{t-r})\biggr)dr\]
and note that by computing the difference between forward (It\^o) and backward stochastic integral, \[ \int_{t-s}^t F'(r, B_r )dB_r =   \int_0^s F'(t-r, B_{t-r})dW_r - \int_0^s F^{''}(t-r, B_{t-r}) dr. \] 

The proof of corollary \ref{F(B)} is the completed by application of Theorem \ref{thm:main}.

%
%
%
%
%
%
%
%
\begin{remark}
If function $F'(t,x)$ is not space depedent, e.g. $F(t, x) = t^px $ or $ F(t, x) = \sqrt{\kappa}x $, we can apply the formula \[ \int_{t-s}^t F'(r)dB_r = \int_0^s F'(t-r)dW_r\]Note that right-hand side is indeed a martingale w.r.t. the filtration $\mathcal{F}$ and we do not have to work with expanded filtration $\tilde{\mathcal{F}}$. In this case the canonical decomposition of $\beta$ is given by \[ \beta_s =  \int_0^s F'(t-r)dW_r + \int_0^s \dot{F}(t-r, B_{t-r})dr\]and Theorem \ref{thm:main} again can be applied considering $\beta$ as a semimartingale w.r.t. the filtration $\mathcal{F}$.
\end{remark}

\section*{Appendix}

We collect some variations on familiar results concerning existence of trace via moments of $f'$.

\begin{lemma}\label{decaylemma} Suppose there exist a $\theta < 1$ and $ y_0 > 0$ such that for all $ y \in (0 , y_0] $
\begin{equation}\label{neededbound} \sup_{ t \in [0,T]} |f_t'(iy + U_t )| \leq y^{-\theta} 
\end{equation}
then the trace exists.

\end{lemma}

\begin{proof}Note that for $ y_1 < y_2 < y_0$, 
\[| f_t(iy_2 + u_t) - f_t(iy_1 + U_t) | = | \int_{y_1}^{y_2} f_t'( ir + U_t) dr| \leq  \int_{y_1}^{y_2} r^{-\theta} dr = \frac{1}{1-\theta}(y_2^{1-\theta} - y_1^{1 - \theta} ) \]which implies that $f_t(iy + U_t)$ is Cauchy in $y $ and thus \[ \gamma_t = \lim\limits_{y \rightarrow 0+ } f_t(iy + U_t)\]exists. For continuity of $\gamma$, observe that 
\[ |\gamma_t -  f_t(iy + U_t) | \leq \frac{y^{1-\theta}}{1-\theta} \]
Now, 
\begin{align*}
|\gamma_t - \gamma_s| & \leq |\gamma_t -  f_t(iy + U_t) | + |\gamma_s-  f_s(iy + U_s) | + | f_t(iy + U_t)  - f_s(iy + U_s)|  \\
& \lesssim y^{1 -\theta} + | f_t(iy + U_t)  - f_s(iy + U_s)|  
\end{align*}
It is easy to see that for $ y > 0$, \[ \lim\limits_{s \rightarrow t }  | f_t(iy + U_t)  - f_s(iy + U_s)| = 0\]and since $y$ was arbitrary, this concludes the proof. 
\end{proof}

%
%
%
%

\begin{lemma}\label{momentbound}If $U$ is weakly $\frac{1}{2}$-Holder and there exist constant $ b > 2 $, $ \theta < 1$ and $ C < \infty$ such that for all $t \in [0,T]$ and $y > 0$
 \[ \mathbb{P}[ |f_t'(iy + U_t)| \geq y^{-\theta}]  \leq Cy^b\] then the trace exists.
\end{lemma}

\begin{proof} By using of Borel-Cantelli lemma, it is easy that almost surely for $n$ large enough,  
\[ |f_{k2^{-2n}}'( i 2^{-n} + U_{ k 2^{-2n}})| \leq 2^{ n \theta} \] 
for all $ k = 0, 1, .., 2^{2n} - 1$. Now applying results in section $3$ of \cite{JVL11} (Lemma $3.7$ and distortion Theorem in particular ) completes the proof.  
\end{proof}

\end{document}